\newtheorem{theorem}{Theorem}
\newtheorem{corollary}{Corollary}
\newtheorem{example}{Example}
\journal{arXiv}
\begin{document}

\begin{frontmatter}

%% Title, authors and addresses

%% use the tnoteref command within \title for footnotes;
%% use the tnotetext command for the associated footnote;
%% use the fnref command within \author or \address for footnotes;
%% use the fntext command for the associated footnote;
%% use the corref command within \author for corresponding author footnotes;
%% use the cortext command for the associated footnote;
%% use the ead command for the email address,
%% and the form \ead[url] for the home page:
%%
%% \title{Title\tnoteref{label1}}
%% \tnotetext[label1]{}
%% \author{Name\corref{cor1}\fnref{label2}}
%% \ead{email address}
%% \ead[url]{home page}
%% \fntext[label2]{}
%% \cortext[cor1]{}
%% \address{Address\fnref{label3}}
%% \fntext[label3]{}

\title{An Explicit Identity to Solve Sums of\\ Powers of Complex Functions}

%% use optional labels to link authors explicitly to addresses:
%% \author[label1,label2]{<author name>}
%% \address[label1]{<address>}
%% \address[label2]{<address>}

\author{Dagnachew Jenber Negash}

\address{Addis Ababa Science and Technology University\\Addis Ababa, Ethiopia\\Email: djdm$\_$101979@yahoo.com}
\begin{abstract}
%% Text of abstract
%It is well known that a recurrence relation can be written as a system of linear equation $AX=B$. Therefore 
A recurrence relations for sums of powers of complex functions can be written as a system of linear equation $AX=B$. Using properties of determinant and Cramer's rule for solving systems of linear equation, this paper presents an absolutely explicit identity for solving sums of powers of complex functions with out one sum depends on the others.
\end{abstract}

\begin{keyword}
Sums of Powers of Complex Functions\sep Linear systems of Equation \sep Properties of Determinant \sep Cramers Rule to Solve Linear Systems of Equation
%% keywords here, in the form: keyword \sep keyword

%% MSC codes here, in the form: \MSC code \sep code
%% or \MSC[2008] code \sep code (2000 is the default)

\end{keyword}

\end{frontmatter}

%%
%% Start line numbering here if you want
%%
%\linenumbers

%% main text
\section{Introduction}
\label{S:1}
\justify
Students often encounter formulas for sums of powers of the first $n$ positive integers as examples of statements that can be proved using the Principle of Mathematical Induction and, perhaps less often nowadays, in Riemann sums during an introduction to definite integration. In either situation, they usually see only the first three such sum formulas,
\begin{equation*}
1+2+3+\cdots+n=\frac{n(n+1)}{2}
\end{equation*}

\begin{equation*}
1^2+2^2+3^2+\cdots+n^2=\frac{n(n+1)(2n+1)}{6}
\end{equation*}and
\begin{equation*}
1^3+2^3+3^3+\cdots+n^3=\frac{n^2(n+1)^2}{4}
\end{equation*}
for any positive integer $n$.\\[3mm]
Formulas for sums of integer powers were first given in generalizable form in the West by Thomas Harriot (c. 1560-1621) of England. At about the same time, Johann Faulhaber (1580-1635) of Germany gave formulas for these sums up to the $17^{th}$ power, far higher than anyone before him, but he did not make clear how to generalize them. Pierre de Fermat (1601-1665) often is credited with the discovery of formulas for sums of integer powers, but his fellow French mathematician Blaise Pascal (1623-1662) gave the formulas much more explicitly. The Swiss mathematician Jakob Bernoulli (1654-1705) is perhaps best and most deservedly known for presenting formulas for sums of integer powers to the European mathematical community. His was the most useful and generalizable formulation to date because he gave by far the most explicit and succinct instructions for finding the coefficients of the formulas.\\
%The author of this article claims that the formula provided in the main result section, are absolute and more explicit even better than Jakob Bernoullis formula because it solves one sum with out depending on the other sums, refer to Section \ref{S:3}.
Generally, the present paper provided an identity to solve one sums of powers of complex functions with out depending on the other sums, refer to Section \ref{S:3}.
%\begin{table}[h]
%\centering
%\begin{tabular}{l l l}
%\hline
%\textbf{Treatments} & \textbf{Response 1} & \textbf{Response 2}\\
%\hline
%Treatment 1 & 0.0003262 & 0.562 \\
%Treatment 2 & 0.0015681 & 0.910 \\
%Treatment 3 & 0.0009271 & 0.296 \\
%\hline
%\end{tabular}
%\caption{Table caption}
%\end{table}

%\begin{figure}[h]
%\centering\includegraphics[width=0.4\linewidth]{placeholder}
%\caption{Figure caption}
%\end{figure}

\section{Preliminary}
\label{S:2}
In the following Theorem $1$ and $3$, formulas $(1)$ and $(2)$ generalize the well known formulas for $a=d=1$\cite{8}. We note that equation $(2)$ was found by Wiener\cite{10}; see also\cite{2}. Bachmann\cite{1} found a recurrence for $L_{k,n}(a,d)$ involving only $L_{j,n}$ for $j=1,2,3,\cdots,k-1.$ This paper presents a formula for $L_{k,n}(a,d)$ and $T_{k,n}(a,d)$ from equation $(1)$ and $(2)$ without the involvement of $L_{j,n}$ for $j=1,2,3,\cdots,k-1$ and $T_{j,n}$ for $j=1,2,3,\cdots,k-1$ respectively. 
\begin{theorem}
For all $n,k \in \mathbb{N}$, and $a,d\in{\mathbb{C}}$, where $d\ne0$
\begin{equation}
\sum_{j=0}^{k}\binom{k+1}jd^{k+1-j}L_{j,n}(a,d)=(a+nd)^{k+1}-(a)^{k+1}
\end{equation}Where
\begin{equation*}
\binom {k+1} j=\frac{(k+1)!}{j!(k+1-r)!}
\end{equation*}and
\begin{equation*}
L_{j,n}(a,d)=a^j+(a+d)^j+(a+2d)^j+\cdots+(a+(n-1)d)^j
\end{equation*}

\end{theorem}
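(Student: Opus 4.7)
The plan is a telescoping argument driven by the binomial theorem. The key observation is that the expression $(a+nd)^{k+1}-a^{k+1}$ on the right-hand side looks exactly like the total of a telescoping sum of consecutive terms of the arithmetic progression $a, a+d, a+2d, \dots, a+nd$, raised to the $(k+1)$-st power. This suggests I should write the right-hand side as
\begin{equation*}
(a+nd)^{k+1} - a^{k+1} = \sum_{i=0}^{n-1}\Bigl[(a+(i+1)d)^{k+1} - (a+id)^{k+1}\Bigr],
\end{equation*}
and then expand each bracket.

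Next, I would apply the binomial theorem to $(a+(i+1)d)^{k+1} = \bigl((a+id)+d\bigr)^{k+1}$, obtaining
\begin{equation*}
\bigl((a+id)+d\bigr)^{k+1} = \sum_{j=0}^{k+1}\binom{k+1}{j}(a+id)^{j}\,d^{k+1-j}.
\end{equation*}
The crucial cancellation is that the $j=k+1$ term equals $(a+id)^{k+1}$, which is precisely what gets subtracted, so
\begin{equation*}
(a+(i+1)d)^{k+1} - (a+id)^{k+1} = \sum_{j=0}^{k}\binom{k+1}{j}\,d^{k+1-j}\,(a+id)^{j}.
\end{equation*}

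Finally, I would sum this identity over $i = 0, 1, \dots, n-1$, interchange the order of summation on the right, and recognize the inner sum $\sum_{i=0}^{n-1}(a+id)^{j}$ as precisely the definition of $L_{j,n}(a,d)$. Combined with the telescoping collapse on the left, this yields the claimed identity.

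I do not expect any real obstacle here; the proof is essentially a one-line application of the binomial theorem plus telescoping, with the only subtle point being the convenient cancellation of the top-degree term $j=k+1$, which is what makes the upper summation index on the left-hand side of the statement be $k$ rather than $k+1$. An alternative (longer) route would be induction on $k$ or $n$, but the telescoping argument is cleaner and makes the structural reason for the identity transparent, so I would present that.
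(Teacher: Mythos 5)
Your proof is correct: the telescoping decomposition of $(a+nd)^{k+1}-a^{k+1}$ into consecutive differences, the binomial expansion of $((a+id)+d)^{k+1}$ with the top-degree term cancelling, and the interchange of summation to produce $L_{j,n}(a,d)$ all go through without any gaps. The paper itself offers no proof of this theorem — it is stated as a preliminary and attributed to the literature (Riordan, Wiener, Comtet, Bachmann) — so there is no authorial argument to compare against; yours is the standard and cleanest derivation of the identity.
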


\begin{theorem}
The above theorem can be rewritten in the following system of equation form for $n=1,2,3,\cdots$
\begin{equation*}
AX=B
\end{equation*}Where 
\begin{equation*}
A=\begin{pmatrix}
\binom 1 0 d & 0 & 0 & 0 & 0 &  0 & 0 & 0 &0 & 0\\[2mm]
\binom 2 0 d^2& \binom 2 1 d& 0 & 0 & 0 &  0 & 0 & 0 & 0 & 0\\[2mm]
\binom 3 0 d^3& \binom 3 1 d^2& \binom 3 2 d & 0 & 0 &  0 & 0 & 0 & 0 & 0\\[2mm]
\binom 4 0 d^4& \binom 4 1 d^3 & \binom 4 2 d^2 & \binom 4 3 d & 0 & 0 & 0 & 0 & 0 & 0
\\[2mm]
\binom 5 0 d^5& \binom 5 1 d^4 &\binom 5 2 d^3 & \binom 5 3 d^2 & \binom 5 4 d & 0 & 0 & 0 & 0 & 0\\[2mm]
. & . &. &.&.& .& 0 & 0 & 0 &0\\[2mm]
. & . &. &.&.& . & . & 0 & 0 &0\\[2mm]
. & . &. &.&.& . & . & . & 0 &0\\[2mm]
\binom {k} 0d^k & \binom {k} 1 d^{k-1}&\binom {k} 2 d^{k-2} & \binom {k} 3 d^{k-3} & . & . & . & .  & \binom {k} {k-1}d & 0\\[2mm]
\binom {k+1} 0 d^{k+1} & \binom {k+1} 1 d^{k}&\binom {k+1} 2 d^{k-1}& \binom {k+1} 3 d^{k-2} & . & . & . & . &\binom {k+1} {k-1}d^2 & \binom {k+1} {k}d\\[2mm]
\end{pmatrix}
\end{equation*}
\begin{equation*}
X=
\begin{pmatrix}
L_{0,n}(a,d)\\[2mm]
L_{1,n}(a,d)\\[2mm]
L_{2,n}(a,d)\\[2mm]
L_{3,n}(a,d)\\[2mm]
. \\[2mm]
.\\[2mm]
.\\[2mm]
L_{k-1,n}(a,d)\\[2mm]
L_{k,n}(a,d)
\end{pmatrix}
 \text{ , }
B=\begin{pmatrix}
(a+nd)-a\\[2mm]
(a+nd)^{2}-a^{2}\\[2mm]
(a+nd)^{3}-a^{3}\\[2mm]
(a+nd)^{4}-a^{4}\\[2mm]
.\\[2mm]
.\\[2mm]
.\\[2mm]
(a+nd)^{k}-a^{k}\\[2mm]
(a+nd)^{k+1}-a^{k+1}
\end{pmatrix}
\end{equation*}
Thus to solve for $X$ we can use cramer's rule of solving linear systems of equation, since $det(A)\neq 0$
\end{theorem}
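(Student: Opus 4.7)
The plan is to verify two claims: first, that the matrix equation $AX=B$ is a convenient rewriting of $k+1$ successive instances of Theorem 1; and second, that the coefficient matrix $A$ is invertible, so that Cramer's rule applies.

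For the first claim, I would apply Theorem 1 not only for the exponent $k$ given in its statement, but for each exponent $m=0,1,\ldots,k$. This yields the family of identities
\begin{equation*}
\sum_{j=0}^{m}\binom{m+1}{j}d^{m+1-j}L_{j,n}(a,d)=(a+nd)^{m+1}-a^{m+1}.
\end{equation*}
Using the convention $\binom{m+1}{j}=0$ for $j>m$, the $(m+1)$-th such identity can be regarded as a linear equation in all $k+1$ unknowns $L_{0,n},L_{1,n},\ldots,L_{k,n}$, whose coefficient vector is exactly the $(m+1)$-th row of $A$ as displayed in the theorem, and whose right-hand side is the $(m+1)$-th entry of $B$. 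Stacking these $k+1$ identities for $m=0,1,\ldots,k$ therefore reproduces the matrix equation $AX=B$ verbatim; no further argument is needed for this part.

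For the second claim, I would observe that $A$ is lower triangular by construction, since $\binom{m+1}{j}=0$ whenever $j>m$. Consequently $\det(A)$ equals the product of its diagonal entries,
\begin{equation*}
\det(A)=\prod_{m=0}^{k}\binom{m+1}{m}d=\prod_{m=0}^{k}(m+1)d=(k+1)!\,d^{\,k+1},
\end{equation*}
which is nonzero because $d\neq 0$. Hence $A$ is invertible and Cramer's rule produces a closed-form expression for each $L_{j,n}(a,d)$ as a ratio of two determinants, which is precisely what Section \ref{S:3} will exploit.

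The only delicate point is the indexing bookkeeping: one must confirm that the vanishing of $\binom{m+1}{j}$ for $j>m$ produces exactly the upper-triangular block of zeros shown in $A$, and that the surviving coefficients $\binom{m+1}{j}d^{m+1-j}$ align with the written entries row by row. Once this cross-check is performed, both the equivalence of the two formulations and the non-vanishing of $\det(A)$ follow without any substantive computation, so there is no real analytical obstacle to overcome.
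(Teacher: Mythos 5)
Your proposal is correct, and it in fact supplies a verification that the paper omits entirely: Theorem 2 is stated without proof, and your argument (stack the instances of Theorem 1 for exponents $m=0,1,\ldots,k$, observe that the resulting coefficient matrix is lower triangular, and read off $\det(A)=\prod_{m=0}^{k}(m+1)d=(k+1)!\,d^{k+1}\neq 0$) is exactly the computation the paper relies on implicitly later, e.g.\ the denominator $n!\,d^{n}$ appearing in the proof of Theorem 6. One small correction to your bookkeeping: the zeros above the diagonal do not come from a vanishing binomial coefficient, since $\binom{m+1}{m+1}=1\neq 0$; they come from the fact that the sum in equation $(1)$ deliberately stops at $j=m$, so the equation for exponent $m$ simply does not involve $L_{j,n}$ for $j>m$ --- with that phrasing the lower-triangularity is immediate and the rest of your argument goes through unchanged.
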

\begin{theorem}
For all $n,k \in \mathbb{N}$, and $a,d\in{\mathbb{C}}$, where $d\ne0$
\begin{equation}
\sum_{j=0}^{k}(-1)^j\binom{k+1}jd^{k+1-j}T_{j,n}(a,d)=(-1)^k\bigg[(a+nd-d)^{k+1}-(a-d)^{k+1}\bigg]
\end{equation}Where
\begin{equation*}
\binom {k+1} j=\frac{(k+1)!}{j!(k+1-r)!}
\end{equation*}and
\begin{equation*}
T_{j,n}(a,d)=a^j-(a+d)^j+(a+2d)^j-\cdots+(-1)^{(n-1)}(a+(n-1)d)^j
\end{equation*}

\end{theorem}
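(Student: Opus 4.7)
The plan is to parallel the standard derivation of Theorem 1 but introduce alternating signs by multiplying by $(-1)^i$ before summing over $i$. First, I would apply the binomial theorem to $((a+id)-d)^{k+1}$, whose expansion carries signed coefficients $(-1)^{k+1-j}$. Isolating the $j=k+1$ term (which equals $(a+id)^{k+1}$) and rearranging yields the alternating-coefficient analogue of the binomial difference identity that underlies Theorem 1:
\begin{equation*}
\sum_{j=0}^{k}(-1)^j\binom{k+1}{j}d^{k+1-j}(a+id)^j = (-1)^k\bigl[(a+id)^{k+1} - (a+(i-1)d)^{k+1}\bigr].
\end{equation*}

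Second, I would multiply both sides of this pointwise identity by $(-1)^i$ and sum over $i=0,1,\ldots,n-1$. By the definition $T_{j,n}(a,d) = \sum_{i=0}^{n-1}(-1)^i(a+id)^j$, the summed left-hand side becomes exactly the combination appearing on the left of Theorem 3.

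Third, I would evaluate the resulting right-hand side, $(-1)^k\sum_{i=0}^{n-1}(-1)^i[(a+id)^{k+1}-(a+(i-1)d)^{k+1}]$, by shifting the index in the second piece (sending $i-1 \mapsto i$) and combining it with the first. The factor $(-1)^k$ will persist untouched, and the interior terms are meant to pair off, leaving only the boundary contributions $(a+(n-1)d)^{k+1}$ from the top end and $-(a-d)^{k+1}$ from the bottom end.

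The main obstacle lies in this final collapse. In Theorem 1 the telescoping is immediate because the signs are constant, but here the extra factor $(-1)^i$ means that shifting the index flips the sign, so adjacent terms threaten to add rather than cancel. I will need to track carefully how the boundary contributions at $i=0$ and $i=n-1$ interact with the parity of $n$ so that the interior terms indeed annihilate. Should the direct telescope prove delicate, a natural fallback is induction on $n$: the base case $n=1$ reduces to the plain binomial theorem (since $T_{j,1}(a,d)=a^j$), and the inductive step can be driven by the one-step recurrence $T_{j,n+1}(a,d) = T_{j,n}(a,d) + (-1)^n(a+nd)^j$ applied termwise to both sides.
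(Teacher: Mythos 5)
The paper offers no proof of this statement---it is quoted as a preliminary and attributed to Wiener---so your argument must stand on its own, and it does not close. The obstacle you flag in your third step is fatal rather than delicate. Your pointwise identity
\begin{equation*}
\sum_{j=0}^{k}(-1)^j\binom{k+1}{j}d^{k+1-j}(a+id)^j=(-1)^k\bigl[(a+id)^{k+1}-(a+(i-1)d)^{k+1}\bigr]
\end{equation*}
is correct, but after multiplying by $(-1)^i$ and summing, put $f(i)=(a+id)^{k+1}$ and shift the index in $\sum_{i=0}^{n-1}(-1)^if(i-1)$: the shift converts the weight $(-1)^i$ into $-(-1)^i$, so the two sums reinforce instead of cancelling, and you obtain
\begin{equation*}
\sum_{i=0}^{n-1}(-1)^i\bigl[f(i)-f(i-1)\bigr]=(-1)^{n-1}f(n-1)-f(-1)+2\sum_{i=0}^{n-2}(-1)^if(i).
\end{equation*}
The last term is $2T_{k+1,n-1}(a,d)$, a genuine alternating sum of $(k+1)$-st powers, not a boundary contribution; no bookkeeping of parities makes it vanish.

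The deeper problem is that the identity is false as printed, so neither the telescope nor your fallback induction can succeed. Take $k=1$, $n=2$, $a=d=1$: then $T_{0,2}=0$ and $T_{1,2}=-1$, so the left side equals $2$, while the right side is $(-1)\bigl[2^{2}-0^{2}\bigr]=-4$. Your induction detects the same failure: the step from $n$ to $n+1$ forces $(a+nd)^{k+1}-(a+(n-1)d)^{k+1}=(-1)^n\bigl[(a+nd)^{k+1}-(a+(n-1)d)^{k+1}\bigr]$, which fails for every odd $n$, already at $n=1\to 2$ (the base case $n=1$ does hold, exactly as you say). What your computation actually proves is
\begin{equation*}
\sum_{j=0}^{k}(-1)^j\binom{k+1}{j}d^{k+1-j}T_{j,n}(a,d)=(-1)^k\Bigl[2T_{k+1,n}(a,d)+(-1)^{n}(a+nd-d)^{k+1}-(a-d)^{k+1}\Bigr],
\end{equation*}
in which the order-$(k+1)$ alternating power sum necessarily survives with coefficient $2$; this is the correct analogue of Theorem 1 and is the most any telescoping argument can deliver here. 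You should prove this corrected statement (or a parity-restricted variant) rather than the one printed, which cannot be proved because it is not true.
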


\section{Main Result}
\label{S:3}
\begin{theorem}
For all $k=2,3,4,\cdots$ and $n=k+1,k+2,k+3,\cdots$
\begin{equation}
S^{k-2}_n=-\binom n {k-1}\frac{1}{k}d^{n-k}S^{k-3}_k+S^{k-3}_{n}
\end{equation}Where
\begin{equation*}
S^0_{n}=S_n=\bigg(\frac{n}{2}-1\bigg)kd^{n}-\frac{n}{2}d^{n-2}((a+kd)^2-a^2)+(a+kd)^{n}-a^{n}
\end{equation*}
\end{theorem}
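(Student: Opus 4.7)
My approach would be proof by induction on the parameter $k$. The base case $k=2$ requires verifying $S^0_n = -\binom{n}{1}\frac{1}{2}d^{n-2}S^{-1}_2 + S^{-1}_n$; granted an unambiguous definition of the ``$S^{-1}$'' level, this is a polynomial identity in $n$ and $d$ that can be checked directly by substituting the closed form of $S^0_n = S_n$ given in the statement and expanding the right-hand side. I would begin the write-up with that check, both to nail down the correct interpretation of $S^j_n$ for negative superscripts and to make the inductive step easier to follow.

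For the inductive step, I would work back through the Cramer's rule framework of Theorem 2. Since $A$ is lower triangular with $\det(A) = (k+1)!\,d^{k+1}$, Cramer's rule expresses $L_{k,n}(a,d)$ as a quotient of determinants, and it is natural to identify the $S^{j}_n$ (for successive $j$) as the determinants produced by iterated cofactor reductions of the numerator matrix (the one with $B$ inserted into the last column). Under this reading, the difference $S^{k-2}_n - S^{k-3}_n$ is precisely the contribution of the single row encoding the identity from Theorem 1 at exponent $k-1$. Factoring out the uniform $d^{n-k}/k$ and the binomial $\binom{n}{k-1}$ should leave a subdeterminant structurally identical to $S^{k-3}_k$, that is, the same expression evaluated under the specialization $n \mapsto k$.

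The principal obstacle, as I see it, will be justifying exactly that specialization $n \mapsto k$ in the $S^{k-3}_k$ factor. This should reflect a row or column collapsing when the argument $n$ is replaced by $k$ (for example, entries of the form $(a+nd)^j - a^j$ becoming $(a+kd)^j - a^j$), but making this rigorous requires careful bookkeeping of how the cofactor expansion interacts with the entries of $B$ and with the binomial entries $\binom{k+1}{j}d^{k+1-j}$. A secondary difficulty is that the statement implicitly requires $S^{j}_n$ to make sense for $j = -1, 0, 1, \ldots, k-2$, so the induction must be framed to cover this entire range consistently. I would therefore likely prove a slightly stronger statement that simultaneously tracks the emerging binomial factor $\binom{n}{k-1}/k$, the power $d^{n-k}$, and the specialization $n\mapsto k$, so that the inductive hypothesis is strong enough to produce the cleaner identity displayed in the theorem.
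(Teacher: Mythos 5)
Your plan does not engage with what equation $(3)$ actually is, and the step you single out as the principal obstacle is a phantom while the one computation that genuinely needs checking goes unmentioned. In the paper the quantities $S^{j}_{n}$ are not an independently defined family against which a recurrence could be verified by induction on $k$: they are \emph{defined} on the fly as the entries of the last (augmented) column of the Cramer numerator matrix $M_1$ after successive rounds of forward elimination, and equation $(3)$ is nothing more than the bookkeeping of one such round. Concretely, at the stage that uses row $k$ as pivot, the pivot entry is $\binom{k}{k-1}d = kd$, the entry to be annihilated in row $n$ is $\binom{n}{k-1}d^{\,n-k+1}$, so the row operation is $R_n \leftarrow R_n - \binom{n}{k-1}\tfrac{1}{k}d^{\,n-k}R_k$, and applying it to the last column yields exactly $S^{k-2}_n = -\binom{n}{k-1}\tfrac{1}{k}d^{\,n-k}S^{k-3}_k + S^{k-3}_n$. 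The only thing to verify is that $\binom{n}{k-1}\tfrac{1}{k}d^{\,n-k}\cdot \binom{k}{k-1}d = \binom{n}{k-1}d^{\,n-k+1}$; your proposal never performs this check. To make your induction non-circular you would first have to supply the missing independent definition of $S^{j}_n$ (say, as an explicit minor), and once you do, each value of $k$ is a self-contained one-row-operation identity: no inductive hypothesis is used, and your base case at $k=2$ forces you to invent a level $S^{-1}_n$ that the paper never defines (its own proof derives the identity only for $k\ge 3$, in tension with the theorem's claim of $k\ge 2$; the natural choice $S^{-1}_n = J^n - d^{\,n-1}J$ works but merely relocates the same single row-operation check).

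The ``specialization $n\mapsto k$'' that you expect to be the crux does not occur. The subscript of $S^{j}_{m}$ is a \emph{row index}, so $S^{k-3}_k$ is simply the augmented entry of the pivot row $k$ --- the same recursively produced family evaluated at row $k$ --- and nothing about the quantities $J^r=(a+nd)^r-a^r$ changes along the way; in particular $S^{k-3}_k$ involves $J^k=(a+nd)^k-a^k$, not $(a+kd)^k-a^k$, and no row or column ``collapses.'' The appearance of $(a+kd)^n-a^n$ in the closed form of $S^0_n$ quoted in the theorem statement comes from the paper silently renaming the number of summands from $n$ to $k$ between Theorems $1$--$3$ and Theorems $4$--$6$; that is a notational hazard to flag, not a mathematical step your proof needs to justify. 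Also note the paper proceeds by elementary row operations, not by the iterated cofactor expansions you describe, though for this lower-triangular-plus-last-column shape the two bookkeepings coincide.
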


\begin{proof}Let $J^r=(a+nd)^r-a^r$ for $r=1,2,3,\cdots,k+1$ and
\begin{equation*}
M_1=\begin{pmatrix}
\binom 1 0 d & 0 & 0 & 0 & 0 &  0 & 0 & 0 &0 & J\\[2mm]
\binom 2 0 d^2& \binom 2 1 d& 0 & 0 & 0 &  0 & 0 & 0 & 0 & J^2\\[2mm]
\binom 3 0 d^3& \binom 3 1 d^2& \binom 3 2 d & 0 & 0 &  0 & 0 & 0 & 0 & J^3\\[2mm]
\binom 4 0 d^4& \binom 4 1 d^3 & \binom 4 2 d^2 & \binom 4 3 d & 0 & 0 & 0 & 0 & 0 & J^4
\\[2mm]
\binom 5 0 d^5& \binom 5 1 d^4 &\binom 5 2 d^3 & \binom 5 3 d^2 & \binom 5 4 d & 0 & 0 & 0 & 0 & J^5\\[2mm]
. & . &. &.&.& .& 0 & 0 & 0 &.\\[2mm]
. & . &. &.&.& . & . & 0 & 0 &.\\[2mm]
. & . &. &.&.& . & . & . & 0 &.\\[2mm]
\binom {k} 0d^k & \binom {k} 1 d^{k-1}&\binom {k} 2 d^{k-2} & \binom {k} 3 d^{k-3} & . & . & . & .  & \binom {k} {k-1}d & J^k\\[2mm]
\binom {k+1} 0 d^{k+1} & \binom {k+1} 1 d^{k}&\binom {k+1} 2 d^{k-1}& \binom {k+1} 3 d^{k-2} & . & . & . & . &\binom {k+1} {k-1}d^2 & J^{k+1}\\[2mm]
\end{pmatrix}
\end{equation*}
Let us apply the following elementary row operations to determine the determinant of matrix $M_1$. Apply
\begin{equation*}
 -d^{n-1}R_1+R_n\longrightarrow R_n \text{ for }n=2,3,\cdots k+1.
\end{equation*}
Then we get the matrix
\begin{equation*}
M_2=\begin{pmatrix}
\binom 1 0 d & 0 & 0 & 0 & 0 &  0 & 0 & 0 &0 & J\\[2mm]
0& \binom 2 1 d& 0 & 0 & 0 &  0 & 0 & 0 & 0 & -d^{1}J+J^2\\[2mm]
0& \binom 3 1 d^2& \binom 3 2 d & 0 & 0 &  0 & 0 & 0 & 0 & -d^{2}J+J^3\\[2mm]
0& \binom 4 1 d^3 & \binom 4 2 d^2 & \binom 4 3 d & 0 & 0 & 0 & 0 & 0 & -d^{3}J+J^4
\\[2mm]
0& \binom 5 1 d^4 &\binom 5 2 d^3 & \binom 5 3 d^2 & \binom 5 4 d & 0 & 0 & 0 & 0 & -d^{4}J+J^5\\[2mm]
. & . &. &.&.& .& 0 & 0 & 0 &.\\[2mm]
. & . &. &.&.& . & . & 0 & 0 &.\\[2mm]
. & . &. &.&.& . & . & . & 0 &.\\[2mm]
0& \binom {k} 1 d^{k-1}&\binom {k} 2 d^{k-2} & \binom {k} 3 d^{k-3} & . & . & . & .  & \binom {k} {k-1}d & -d^{k-1}J+J^k\\[2mm]
0 & \binom {k+1} 1 d^{k}&\binom {k+1} 2 d^{k-1}& \binom {k+1} 3 d^{k-2} & . & . & . & . &\binom {k+1} {k-1}d^2 & -d^{k}J+J^{k+1}\\[2mm]
\end{pmatrix}
\end{equation*}
Apply the elementary row opertion on matrix $M_2$:
\begin{equation*}
 -\binom n 1\frac{1}{2}d^{n-2} R_2+R_n\longrightarrow R_n \text{ for }n=3,4,\cdots k+1. 
\end{equation*}
To get the matrix:
\begin{equation*}
\begin{pmatrix}
\binom 1 0 d & 0 & 0 & 0 & 0 &  0 & 0 & 0 &0 & J\\[2mm]
0& \binom 2 1 d& 0 & 0 & 0 &  0 & 0 & 0 & 0 & -d^{1}J+J^2\\[2mm]
0& 0& \binom 3 2 d & 0 & 0 &  0 & 0 & 0 & 0 &-\frac{3}{2}d^{1}(-d^{1}J+J^2) -d^{2}J+J^3\\[2mm]
0& 0& \binom 4 2 d^2 & \binom 4 3 d & 0 & 0 & 0 & 0 & 0 & -\frac{4}{2}d^{2}(-d^{1}J+J^2)-d^{3}J+J^4
\\[2mm]
0& 0 &\binom 5 2 d^3 & \binom 5 3 d^2 & \binom 5 4 d & 0 & 0 & 0 & 0 &-\frac{5}{2}d^{3}(-d^{1}J+J^2)-d^{4}J+J^5\\[2mm]
. & . &. &.&.& .& 0 & 0 & 0 &.\\[2mm]
. & . &. &.&.& . & . & 0 & 0 &.\\[2mm]
. & . &. &.&.& . & . & . & 0 &.\\[2mm]
0& 0&\binom {k} 2 d^{k-2} & \binom {k} 3 d^{k-3} & . & . & . & .  & \binom {k} {k-1}d & -\frac{k}{2}d^{k-2}(-d^{1}J+J^2)-d^{(k-1)}J+J^k\\[2mm]
0 & 0&\binom {k+1} 2 d^{k-1}& \binom {k+1} 3 d^{k-2} & . & . & . & . &\binom {k+1} {k-1}d^2 & -\frac{k+1}{2}d^{k-1}(-d^{1}J+J^2)-d^{k}J+J^{k+1}\\[2mm]
\end{pmatrix}
\end{equation*}

Let
\begin{equation*}
S_1=J
\end{equation*}
\begin{equation*}
S_2=-d^{1}J+J^2
\end{equation*}
\begin{equation*}
S_3= \frac{3}{2}d^{2}J-\frac{3}{2}d^{1}J^2-d^{2}J+J^3
\end{equation*}
\begin{equation*}
S_4=\frac{4}{2}d^{3}J-\frac{4}{2}d^{2}J^2-d^{3}J+J^4
\end{equation*}
\begin{equation*}
S_5=\frac{5}{2}d^{4}J-\frac{5}{2}d^{3}J^2 -d^{4}J+J^5
\end{equation*}

\begin{equation*}
\cdots
\end{equation*}
\begin{equation*}
\cdots
\end{equation*}
\begin{equation*}
\cdots
\end{equation*}
\begin{equation*}
S_{k}=\frac{k}{2}d^{k-1}J-\frac{k}{2}d^{k-2}J^2-d^{k-1}J+J^k
\end{equation*}
\begin{equation*}
S_{(k+1)}=\frac{k+1}{2}d^{k}J-\frac{k+1}{2}d^{k-1}J^2-d^{k}J+J^{k+1}
\end{equation*}
Therefore matrix $M_4$ becomes
\begin{equation*}M_4=
\begin{pmatrix}
\binom 1 0 d & 0 & 0 & 0 & 0 &  0 & 0 & 0 &0 & S_1\\[2mm]
0& \binom 2 1 d& 0 & 0 & 0 &  0 & 0 & 0 & 0 & S_2\\[2mm]
0& 0& \binom 3 2 d & 0 & 0 &  0 & 0 & 0 & 0 & S_3\\[2mm]
0& 0& \binom 4 2 d^2 & \binom 4 3 d & 0 & 0 & 0 & 0 & 0 & S_4
\\[2mm]
0& 0 &\binom 5 2 d^3 & \binom 5 3 d^2 & \binom 5 4 d & 0 & 0 & 0 & 0 &S_5\\[2mm]
. & . &. &.&.& .& 0 & 0 & 0 &.\\[2mm]
. & . &. &.&.& . & . & 0 & 0 &.\\[2mm]
. & . &. &.&.& . & . & . & 0 &.\\[2mm]
0& 0&\binom {k} 2 d^{k-2} & \binom {k} 3 d^{k-3} & . & . & . & .  & \binom {k} {k-1}d & S_k\\[2mm]
0 & 0&\binom {k+1} 2 d^{k-1}& \binom {k+1} 3 d^{k-2} & . & . & . & . &\binom {k+1} {k-1}d^2 & S_{(k+1)}\\[2mm]
\end{pmatrix}
\end{equation*}
Apply the elementary row opertion on matrix $M_4$:
\begin{equation*}
 -\binom n 2\frac{1}{3}d^{n-3} R_3+R_n\longrightarrow R_n \text{ for }n=4,5,\cdots k+1. 
\end{equation*}
Then we get the matrix
\begin{equation*}M_5=
\begin{pmatrix}
\binom 1 0 d & 0 & 0 & 0 & 0 &  0 & 0 & 0 &0 & S_1\\[2mm]
0& \binom 2 1 d& 0 & 0 & 0 &  0 & 0 & 0 & 0 & S_2\\[2mm]
0& 0& \binom 3 2 d & 0 & 0 &  0 & 0 & 0 & 0 & S_3\\[2mm]
0& 0& 0 & \binom 4 3 d & 0 & 0 & 0 & 0 & 0 & -\binom 4 2\frac{1}{3}d^{1} S_3+S_4
\\[2mm]
0& 0 &0 & \binom 5 3 d^2 & \binom 5 4 d & 0 & 0 & 0 & 0 &-\binom 5 2\frac{1}{3}d^{2} S_3+S_5\\[2mm]
. & . &. &.&.& .& 0 & 0 & 0 &.\\[2mm]
. & . &. &.&.& . & . & 0 & 0 &.\\[2mm]
. & . &. &.&.& . & . & . & 0 &.\\[2mm]
0& 0&0 & \binom {k} 3 d^{k-3} & . & . & . & .  & \binom {k} {k-1}d & -\binom k 2\frac{1}{3}d^{k-3} S_3+S_k\\[2mm]
0 & 0&0& \binom {k+1} 3 d^{k-2} & . & . & . & . &\binom {k+1} {k-1}d^2 & -\binom {k+1} 2\frac{1}{3}d^{k-2} S_3+S_{(k+1)}\\[2mm]
\end{pmatrix}
\end{equation*}
Let
\begin{equation*}
S^1_4=-\binom 4 2\frac{1}{3}d^{1} S_3+S_4
\end{equation*}
\begin{equation*}
S^1_5=-\binom 5 2\frac{1}{3}d^{2} S_3+S_5
\end{equation*}

\begin{equation*}
\cdots
\end{equation*}
\begin{equation*}
\cdots
\end{equation*}
\begin{equation*}
\cdots
\end{equation*}
\begin{equation*}
S^1_{k}=-\binom k 2\frac{1}{3}d^{k-3} S_3+S_k
\end{equation*}
\begin{equation*}
S^1_{(k+1)}=-\binom {k+1} 2\frac{1}{3}d^{k-2} S_3+S_{(k+1)}
\end{equation*}
Therefore 
\begin{equation*}M_5=
\begin{pmatrix}
\binom 1 0 d & 0 & 0 & 0 & 0 &  0 & 0 & 0 &0 & S_1\\[2mm]
0& \binom 2 1 d& 0 & 0 & 0 &  0 & 0 & 0 & 0 & S_2\\[2mm]
0& 0& \binom 3 2 d & 0 & 0 &  0 & 0 & 0 & 0 & S_3\\[2mm]
0& 0& 0 & \binom 4 3 d & 0 & 0 & 0 & 0 & 0 & S^1_4
\\[2mm]
0& 0 &0 & \binom 5 3 d^2 & \binom 5 4 d & 0 & 0 & 0 & 0 &S^1_5\\[2mm]
. & . &. &.&.& .& 0 & 0 & 0 &.\\[2mm]
. & . &. &.&.& . & . & 0 & 0 &.\\[2mm]
. & . &. &.&.& . & . & . & 0 &.\\[2mm]
0& 0&0 & \binom {k} 3 d^{k-3} & . & . & . & .  & \binom {k} {k-1}d & S^1_k\\[2mm]
0 & 0&0& \binom {k+1} 3 d^{k-2} & . & . & . & . &\binom {k+1} {k-1}d^2 & S^1_{(k+1)}\\[2mm]
\end{pmatrix}
\end{equation*}
Apply the elementary row opertion on matrix $M_5$:
\begin{equation*}
 -\binom n 3\frac{1}{4}d^{n-4}R_4+R_n\longrightarrow R_n \text{ for }n=5,6,\cdots k+1. 
\end{equation*}
Then we get the matrix
\begin{equation*}M_6=
\begin{pmatrix}
\binom 1 0 d & 0 & 0 & 0 & 0 &  0 & 0 & 0 &0 & S_1\\[2mm]
0& \binom 2 1 d& 0 & 0 & 0 &  0 & 0 & 0 & 0 & S_2\\[2mm]
0& 0& \binom 3 2 d & 0 & 0 &  0 & 0 & 0 & 0 & S_3\\[2mm]
0& 0& 0 & \binom 4 3 d & 0 & 0 & 0 & 0 & 0 & S^1_4
\\[2mm]
0& 0 &0 & 0 & \binom 5 4 d & 0 & 0 & 0 & 0 &-\binom 5 3\frac{1}{4}d^{1} S^1_4+S^1_5\\[2mm]
. & . &. &.&.& .& 0 & 0 & 0 &.\\[2mm]
. & . &. &.&.& . & . & 0 & 0 &.\\[2mm]
. & . &. &.&.& . & . & . & 0 &.\\[2mm]
0& 0&0 & 0 & . & . & . & .  & \binom {k} {k-1}d &-\binom k 3\frac{1}{4}d^{k-4} S^1_4 +S^1_k\\[2mm]
0 & 0&0& 0 & . & . & . & . &\binom {k+1} {k-1}d^2 &-\binom {k+1} 3\frac{1}{4}d^{k-3} S^1_4+ S^1_{(k+1)}\\[2mm]
\end{pmatrix}
\end{equation*}

Let
\begin{equation*}
S^2_5=-\binom 5 3\frac{1}{4}d^{1} S^1_4+S^1_5
\end{equation*}
\begin{equation*}
S^2_6=-\binom 6 3\frac{1}{4}d^{2} S^1_4+S^1_6
\end{equation*}
\begin{equation*}
S^2_7=-\binom 7 3\frac{1}{4}d^{3} S^1_4+S^1_7
\end{equation*}
\begin{equation*}
S^2_8=-\binom 8 3\frac{1}{4}d^{4} S^1_4+S^1_8
\end{equation*}
\begin{equation*}
\cdots
\end{equation*}
\begin{equation*}
\cdots
\end{equation*}
\begin{equation*}
\cdots
\end{equation*}
\begin{equation*}
S^2_{k}=-\binom {k} 3\frac{1}{4}d^{k-4} S^1_4 +S^1_k
\end{equation*}
\begin{equation*}
S^2_{(k+1)}=-\binom {k+1} 3\frac{1}{4}d^{k-3} S^1_4+ S^1_{(k+1)}
\end{equation*}
Therefore 
\begin{equation*}M_6=
\begin{pmatrix}
\binom 1 0 d & 0 & 0 & 0 & 0 &  0 & 0 & 0 &0 & S_1\\[2mm]
0& \binom 2 1 d& 0 & 0 & 0 &  0 & 0 & 0 & 0 & S_2\\[2mm]
0& 0& \binom 3 2 d & 0 & 0 &  0 & 0 & 0 & 0 & S_3\\[2mm]
0& 0& 0 & \binom 4 3 d & 0 & 0 & 0 & 0 & 0 & S^1_4
\\[2mm]
0& 0 &0 & 0 & \binom 5 4 d & 0 & 0 & 0 & 0 &S^2_5\\[2mm]
. & . &. &.&.& .& 0 & 0 & 0 &.\\[2mm]
. & . &. &.&.& . & . & 0 & 0 &.\\[2mm]
. & . &. &.&.& . & . & . & 0 &.\\[2mm]
0& 0&0 & 0 & . & . & . & .  & \binom {k} {k-1}d &S^2_k\\[2mm]
0 & 0&0& 0 & . & . & . & . &\binom {k+1} {k-1}d^2 & S^2_{(k+1)}\\[2mm]
\end{pmatrix}
\end{equation*}
By applying Elementary row operation repeatedly, then we get the matrix
\begin{equation*}M_{k+2}=
\begin{pmatrix}
\binom 1 0 d & 0 & 0 & 0 & 0 &  0 & 0 & 0 &0 & S_1\\[2mm]
 0 & \binom 2 1 d& 0 & 0 & 0 &  0 & 0 & 0 & 0 & S_2\\[2mm]
 0 & 0& \binom 3 2 d& 0 & 0 &  0 & 0 & 0 & 0 & S_3\\[2mm]
0 & 0 & 0 & \binom 4 3d & 0 & 0 & 0 & 0 & 0 &S^1_4
\\[2mm]
0 & 0 & 0 & 0 & \binom 5 4 d& 0 & 0 & 0 & 0 &S^2_5\\[2mm]
. & . &. &.&.& .& 0 & 0 & 0 & S^3_6\\[2mm]
. & . &. &.&.& . & . & 0 & 0 & S^4_7\\[2mm]
. & . &. &.&.& . & . & . & 0 & S^5_8\\[2mm]
. & . &. &.&.& . & . & . & . & .\\[2mm]
. & . &. &.&.& . & . & . & . & .\\[2mm]
. & . &. &.&.& . & . & . & . & .\\[2mm]
 0 & 0 & 0 & 0 & . & . & . & .  & \binom {k} {k-1}d &S^{(k-3)}_{k}\\[2mm]
0 & 0 & 0 & 0 & . & . & . & . & 0 & S^{(k-2)}_{(k+1)}\\[2mm]
\end{pmatrix}
\end{equation*}
Therefore we get the identity for $k=3,4,5,\cdots$
\begin{equation*}
S^{k-2}_n=-\binom n {k-1}\frac{1}{k}d^{n-k}S^{k-3}_k+S^{k-3}_{n}\text{ for }n=k+1,k+2,k+3,\cdots
\end{equation*}
Hence from properties of determinant
\begin{equation*}
|M_1|=|M_{(k+3)}|=S^{(k-2)}_{(k+1)}d^k\prod_{i=1}^{k}(k+1-i)=k!d^{k}S^{(k-2)}_{(k+1)}
\end{equation*}
\end{proof}

\begin{theorem} For $n=4,5,6,7,\cdots$
\begin{equation}
S^{(n-3)}_n=\sum_{i=0}^{m}\binom m i \bigg(\frac{d}{2}\bigg)^i\frac{n!}{(n-i)!}(-1)^iS_{n-i}^{n-3-m}
\end{equation}
\end{theorem}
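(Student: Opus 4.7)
The identity has the shape of an unrolled recursion: the upper index of $S$ drops by $m$ from the left-hand side to the right-hand side, while the lower index varies over $n, n-1, \ldots, n-m$. The natural strategy is therefore induction on $m$, with Theorem~4 serving as the engine that lowers the upper index by one at each step.

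\textbf{Base case.} For $m=0$ the sum collapses to the single term at $i=0$, namely $\binom{0}{0}(d/2)^0\frac{n!}{n!}(-1)^0\,S^{n-3}_n = S^{n-3}_n$, which matches the left-hand side trivially.

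\textbf{Inductive step.} Assuming the identity at level $m$, apply Theorem~4 with the parameter choice $k-2 = n-3-m$ (equivalently $k = n-1-m$) to every summand $S^{n-3-m}_{n-i}$ of the hypothesis; this yields, for $i = 0, 1, \ldots, m$,
\begin{equation*}
S^{n-3-m}_{n-i} \;=\; -\binom{n-i}{n-2-m}\frac{d^{m-i+1}}{n-1-m}\,S^{n-4-m}_{\,n-1-m} \;+\; S^{n-4-m}_{n-i}.
\end{equation*}
Substituting and regrouping rewrites $S^{n-3}_n$ as a linear combination of $S^{n-4-m}_{n-j}$ for $j = 0, 1, \ldots, m+1$: the indices $j \le m$ inherit their coefficients from the ``$+S^{n-4-m}_{n-i}$'' piece only, whereas the new index $j = m+1$ (the fixed pivot row $n-1-m$) accumulates pivot contributions from every one of the $m+1$ substitutions.

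\textbf{Main obstacle.} Closing the induction boils down to two combinatorial checks. First, the coefficients of the inherited terms must shift from $\binom{m}{j}$ to $\binom{m+1}{j}$ via Pascal's rule, which forces the aggregate of pivot corrections to supply the missing $\binom{m}{j-1}$ piece; second, the coefficient of the new term $S^{n-4-m}_{\,n-(m+1)}$ must match the $i = m+1$ term of the target formula. The heart of the argument is the kernel identity
\begin{equation*}
-\sum_{i=0}^{m}\binom{m}{i}\bigg(\frac{d}{2}\bigg)^{i}\frac{n!}{(n-i)!}(-1)^{i}\binom{n-i}{n-2-m}\frac{d^{m-i+1}}{n-1-m} \;=\; \bigg(\frac{d}{2}\bigg)^{m+1}\frac{n!}{(n-m-1)!}(-1)^{m+1},
\end{equation*}
together with its analogues at the lower indices $j \le m$. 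After pulling out the common factor $(-d/2)^{m+1}\,n!/(n-m-1)!$, each check reduces to a purely numerical sum in $m$ of Chu--Vandermonde type. Verifying this binomial identity is the delicate calculational step I expect to be the principal hurdle; once it is in place, the induction closes and the formula of Theorem~5 follows.
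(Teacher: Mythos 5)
Your overall strategy --- induction on $m$ by feeding the level-$m$ expansion back into the recursion of Theorem~4 and invoking Pascal's rule --- is exactly the shape of the paper's argument (the paper simply writes out the levels $m=1,\dots,5$ explicitly and reads off the binomial pattern). But your inductive step, as written, cannot close, and the obstruction is precisely the point where your reading of Theorem~4 and the paper's usage of it part ways. In Theorem~4 the pivot term is $S^{k-3}_{k}$: its subscript is the \emph{fixed} value $k=n-1-m$, the same for every row. So when you substitute into all $m+1$ summands $S^{n-3-m}_{n-i}$, every pivot correction lands on the single new term $S^{n-4-m}_{n-(m+1)}$, and nothing at all is added to the coefficients of $S^{n-4-m}_{n-j}$ for $1\le j\le m$. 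Those coefficients therefore remain $\binom{m}{j}\big(\tfrac{d}{2}\big)^{j}\tfrac{n!}{(n-j)!}(-1)^{j}$ instead of advancing to $\binom{m+1}{j}\big(\tfrac{d}{2}\big)^{j}\tfrac{n!}{(n-j)!}(-1)^{j}$; the ``missing $\binom{m}{j-1}$ piece'' you invoke has no source. Moreover, your kernel identity for the $j=m+1$ coefficient is false: already at $m=1$ its left-hand side equals $\tfrac{n(n-1)}{12}d^{2}$ while the right-hand side is $\tfrac{n(n-1)}{4}d^{2}$.

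What the paper's proof actually uses is the nearest-neighbour recursion $S^{j}_{r}=-\tfrac{r}{2}d\,S^{j-1}_{r-1}+S^{j-1}_{r}$, in which each term's correction involves the term one subscript \emph{below itself}, not a fixed pivot row. Under that recursion the coefficient of $S^{n-4-m}_{n-j}$ receives one contribution from $i=j$ (inherited) and one from $i=j-1$ (correction), and these combine as $\binom{m}{j}+\binom{m}{j-1}=\binom{m+1}{j}$, closing the induction with no further identity needed. Note, however, that this nearest-neighbour recursion agrees with Theorem~4 as stated only when the subscript equals $k+1$; for the remaining rows the two disagree (which is exactly why your faithful application of Theorem~4 produced a different, and unprovable, kernel identity). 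So the gap is genuine and twofold: your inductive bookkeeping cannot work with a single fixed pivot, and Theorem~5 does not in fact follow from Theorem~4 as literally stated. To repair the argument one must first establish the nearest-neighbour form of the recursion and only then run your induction.
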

\begin{proof}
\begin{equation*}
S^{(n-3)}_n=-\frac{n}{2}dS_{n-1}^{n-4}+S_n^{n-4}=-\frac{n}{2}d\bigg[-\frac{n-1}{2}dS_{n-2}^{n-5}+S_{n-1}^{n-5}\bigg]-\frac{n}{2}dS_{n-1}^{n-5}+S_n^{n-5}
\end{equation*}
\begin{equation*}
=\frac{n(n-1)}{2^2}d^2S_{n-2}^{n-5}-\frac{n}{2}dS_{n-1}^{n-5}-\frac{n}{2}dS_{n-1}^{n-5}+S_n^{n-5}
\end{equation*}

\begin{equation*}
=\frac{n(n-1)}{2^2}d^2\bigg[-\frac{n-2}{2}dS_{n-3}^{n-6}+S_{n-2}^{n-6}\bigg]-2\frac{n}{2}d\bigg[ -\frac{n-1}{2}dS_{n-2}^{n-6}+S_{n-1}^{n-6} \bigg]-\frac{n}{2}dS_{n-1}^{n-6}+S_{n}^{n-6}
\end{equation*}

\begin{equation*}
=-\frac{n(n-1)(n-2)}{2^3}d^3S_{n-3}^{n-6}+\frac{n(n-1)}{2^2}d^2 S_{n-2}^{n-6}+2\bigg[\frac{n(n-1)}{2^2}d^2S_{n-2}^{n-6}-\frac{n}{2}d S_{n-1}^{n-6} \bigg]-\frac{n}{2}dS_{n-1}^{n-6}+S_{n}^{n-6}
\end{equation*}
\begin{equation*}
=\bigg[-\frac{n(n-1)(n-2)}{2^3}d^3S_{n-3}^{n-6}\bigg]+3\bigg[\frac{n(n-1)}{2^2}d^2S_{n-2}^{n-6}-\frac{n}{2}d S_{n-1}^{n-6} \bigg]+S_{n}^{n-6}
\end{equation*}

\begin{eqnarray*}
=-\frac{n(n-1)(n-2)}{2^3}d^3\bigg[-\frac{n-3}{2} dS_{n-4}^{n-7}+S_{n-3}^{n-7}\bigg]+3\frac{n(n-1)}{2^2}d^2\bigg(-\frac{n-2}{2} dS_{n-3}^{n-7}+S_{n-2}^{n-7}\bigg)\\[2mm]-3\frac{n}{2}d\bigg(-\frac{(n-1)}{2} dS_{n-2}^{n-7}+S_{n-1}^{n-7} \bigg)-\frac{n}{2}d S_{n-1}^{n-7}+S_{n}^{n-7}
\end{eqnarray*}

\begin{eqnarray*}
=\frac{n(n-1)(n-2)(n-3)}{2^4}d^4 S_{n-4}^{n-7}-4\frac{n(n-1)(n-2)}{2^3}d^3S_{n-3}^{n-7}+6\frac{n(n-1)}{2^2}d^2S_{n-2}^{n-7}-4\frac{n}{2}d S_{n-1}^{n-7}+S_{n}^{n-7}
\end{eqnarray*}

\begin{eqnarray*}
=-\frac{n(n-1)(n-2)(n-3)(n-4)}{2^5} d^5S_{n-5}^{n-8}+5\frac{n(n-1)(n-2)(n-3)}{2^4}d^4S_{n-4}^{n-8}\\[2mm]-10\frac{n(n-1)(n-2)}{2^3}d^3S_{n-3}^{n-8}+10\frac{n(n-1)}{2^2}d^2 S_{n-2}^{n-8}-5\frac{n}{2}d S_{n-1}^{n-8}+S_n^{n-8}
\end{eqnarray*}
\begin{equation*}
\cdots
\end{equation*}
\begin{equation*}
\cdots
\end{equation*}
\begin{equation*}
\cdots
\end{equation*}
\begin{equation*}
=\sum_{i=0}^{m}\binom m i \frac{\prod_{j=0}^{i-1}(n-j)}{2^i}(-1)^id^iS_{n-i}^{n-3-m}
=\sum_{i=0}^{m}\binom m i \bigg(\frac{d}{2}\bigg)^i\frac{n!}{(n-i)!}(-1)^iS_{n-i}^{n-3-m}
\end{equation*}
\end{proof}
\begin{theorem}Solutions of Theorem $2$ Or solutions of Equation $(3)$:
\begin{equation}
\sum_{r=1}^{k}(a+(r-1)d)^{n-1}=\frac{1}{nd}\bigg[\sum_{i=0}^{m}\binom m i \bigg(\frac{d}{2}\bigg)^i\frac{n!}{(n-i)!}(-1)^iS_{n-i}^{n-3-m}\bigg]
\end{equation}
\end{theorem}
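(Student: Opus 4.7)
The plan is to obtain the closed form by applying Cramer's rule to the linear system of Theorem 2 and then substituting the determinant evaluations furnished by Theorems 4 and 5. Since the coefficient matrix $A$ of Theorem 2 is lower triangular with diagonal entries $\binom{j}{j-1}d=jd$ for $j=1,\dots,k+1$, its determinant is
\begin{equation*}
\det(A)=\prod_{j=1}^{k+1}\binom{j}{j-1}d=(k+1)!\,d^{k+1}\ne 0,
\end{equation*}
so the system is uniquely solvable and Cramer's rule applies. The last unknown $L_{k,n}(a,d)$ then equals $|M_1|/\det(A)$, where $M_1$ is the matrix obtained from $A$ by replacing its last column with the right-hand side $B$.

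Theorem 4 evaluates this numerator as $|M_1|=k!\,d^{k}\,S^{(k-2)}_{k+1}$. Dividing by $\det(A)$ yields the intermediate identity
\begin{equation*}
L_{k,n}(a,d)=\frac{S^{(k-2)}_{k+1}}{(k+1)\,d}.
\end{equation*}
To bring this into the exact shape asserted in Theorem 6 I would relabel indices by sending the role of the ``power'' $k$ to $n-1$ and the role of the ``length'' $n$ to $k$. Under this relabeling the left-hand side becomes $L_{n-1,k}(a,d)=\sum_{r=1}^{k}(a+(r-1)d)^{n-1}$, the factor $(k+1)d$ becomes $nd$, and the superscript on $S$ becomes $(n-1)-2=n-3$. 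Finally I would feed $S^{(n-3)}_n$ through Theorem 5, which delivers exactly the stated sum $\sum_{i=0}^{m}\binom{m}{i}(d/2)^i\,\frac{n!}{(n-i)!}(-1)^i S_{n-i}^{n-3-m}$.

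The main difficulty I anticipate is not analytical but notational: the three preceding theorems reuse the symbols $k$ and $n$ with different meanings (highest power appearing, row index of the matrix, length of the arithmetic-progression sum), so the critical care lies in tracking which index plays which role at each substitution. A secondary check is that the diagonal product really is $(k+1)!\,d^{k+1}$, since this nonvanishing is what legitimizes Cramer's rule and simultaneously produces the $1/(nd)$ prefactor after relabeling. Once this bookkeeping is aligned, the theorem follows as a direct concatenation of the earlier results with no fresh computation required.
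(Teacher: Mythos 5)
Your proposal is correct and follows essentially the same route as the paper: Cramer's rule with $\det(A)$ the product of the triangular diagonal, Theorem 4 for the numerator determinant $|M_1|=k!\,d^{k}S^{(k-2)}_{k+1}$, and Theorem 5 to expand $S^{(n-3)}_{n}$; the paper's one-line computation $\frac{(n-1)!\,d^{n-1}}{n!\,d^{n}}S^{(n-3)}_{n}=\frac{1}{nd}S^{(n-3)}_{n}$ is exactly your quotient after the index swap. Your explicit tracking of the relabeling (power $k\mapsto n-1$, length $n\mapsto k$) is the only detail the paper leaves implicit.
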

\begin{proof}Easily followed from Cramer's rule of solving linear system of equations:
\begin{equation*}
\sum_{r=1}^{k}(a+(r-1)d)^{n-1}=\frac{(n-1)!d^{(n-1)}}{n!d^n}S^{(n-3)}_n=\frac{1}{nd}S^{(n-3)}_n
\end{equation*}
\begin{equation*}
=\frac{1}{nd}\bigg[\sum_{i=0}^{m}\binom m i \bigg(\frac{d}{2}\bigg)^i\frac{n!}{(n-i)!}(-1)^iS_{n-i}^{n-3-m}\bigg]
\end{equation*}
\begin{equation*}
\Longrightarrow \sum_{r=1}^{k}(a+(r-1)d)^{n-1}=\frac{1}{nd}\bigg[\sum_{i=0}^{m}\binom m i \bigg(\frac{d}{2}\bigg)^i\frac{n!}{(n-i)!}(-1)^iS_{n-i}^{n-3-m}\bigg]
\end{equation*}
\end{proof}
\begin{corollary}
\begin{equation}
\frac{d}{(n-1)!(n-3)!}\sum_{r=1}^{k}(a+(r-1)d)^{n-1}=\sum_{i=0}^{n-3}\frac{1}{i!(n-i)!(n-3-i)!} \bigg(\frac{d}{2}\bigg)^i(-1)^iS_{n-i}
\end{equation}
Where 
\begin{equation*}
S_{n-i}^0=S_{n-i}=\frac{n-i}{2}d^{n-i-1}J-\frac{n-i}{2}d^{n-i-2}J^2-d^{n-i-1}J+J^{n-i}
\end{equation*}
\begin{equation}
=\bigg(\frac{n-i}{2}-1\bigg)kd^{n-i}-\frac{n-i}{2}d^{n-i-2}((a+kd)^2-a^2)+(a+kd)^{n-i}-a^{n-i}
\end{equation}
\end{corollary}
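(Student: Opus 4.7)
The plan is to derive the corollary as a straightforward specialization of Theorem 6, which already provides a closed-form expression for $\sum_{r=1}^{k}(a+(r-1)d)^{n-1}$ in terms of an auxiliary index $m$. The key observation is that the right-hand side of the corollary carries no superscript on $S_{n-i}$, so I must choose $m$ in Theorem 6 to force the superscript $n-3-m$ down to zero. This singles out $m=n-3$ as the correct specialization, reducing each inner $S_{n-i}^{\,n-3-m}$ to the base case $S_{n-i}^{\,0}=S_{n-i}$ from Theorem 4.

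After this substitution, I would expand $\binom{n-3}{i}=\frac{(n-3)!}{i!\,(n-3-i)!}$ and then multiply both sides of the specialized Theorem 6 by the prefactor $\frac{d}{(n-1)!\,(n-3)!}$ appearing in the corollary. The $d$ in the numerator cancels the $d$ inside the $\frac{1}{nd}$ factor of Theorem 6; the product $n\cdot(n-1)!$ collapses to $n!$, which in turn cancels the $n!$ sitting in the quotient $\frac{n!}{(n-i)!}$; and the outer $(n-3)!$ cancels the corresponding $(n-3)!$ brought down from the expanded binomial coefficient. What survives on each term is precisely $\frac{1}{i!\,(n-i)!\,(n-3-i)!}\bigl(\frac{d}{2}\bigr)^{i}(-1)^{i}S_{n-i}$, yielding the identity as stated.

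For the second displayed expression of $S_{n-i}$, I would substitute $n\mapsto n-i$ into the base formula $S_{n}^{0}=S_{n}$ of Theorem 4 and then unfold the abbreviations $J=J^{1}=kd$ and $J^{r}=(a+kd)^{r}-a^{r}$ introduced in its proof. Collecting the two terms proportional to $kd^{\,n-i}$ into the single term $\bigl(\frac{n-i}{2}-1\bigr)kd^{\,n-i}$ then reproduces the compact closed form claimed in the statement.

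There is no genuine obstacle; the argument is pure bookkeeping. The only step requiring care is tracking indices during the specialization $m=n-3$ and verifying that every factorial and power of $d$ telescopes correctly. That is the single place where one could easily make an off-by-one error, so it is where I would slow down and check the cancellations most carefully.
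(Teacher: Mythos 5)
Your proposal is correct and takes essentially the same route as the paper: the paper's entire proof is the one-line remark that the corollary ``follows from equation $(5)$ for $m=n-3$,'' which is exactly your specialization of Theorem 6. You merely make explicit the factorial and power-of-$d$ cancellations and the unfolding of $J^{r}=(a+kd)^{r}-a^{r}$ that the paper leaves implicit, and those details check out.
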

\begin{proof}
Follows from equation $(5)$ for $m=n-3$.
\end{proof}
\begin{example} Solve for
\begin{equation*}
\sum_{r=1}^{k}r^{3000}
\end{equation*}
\end{example}
{\bf{Solution.}} From Equation $(6)$ for $a=d=1$, we have
\begin{equation*}
\sum_{r=1}^{k}r^{3000}=\frac{1}{3001}\bigg[\sum_{i=0}^{m}\binom m i \frac{1}{2^i}\frac{3001!}{(3001-i)!}(-1)^iS_{3001-i}^{3001-3-m} \bigg]
\end{equation*}
\begin{equation*}
=3000!\bigg[\sum_{i=0}^{2998}\binom {2998} i \frac{1}{2^i}\frac{1}{(3001-i)!}(-1)^iS_{3001-i}^{0} \bigg]
\end{equation*}
Where, for $i=0,1,\cdots 2998$, we have
\begin{equation*}
S_{3001-i}^0=S_{3001-i}=\bigg(\frac{3001-i}{2}-1\bigg)k-\frac{3001-i}{2}(k^2+2k)+(1+k)^{3001-i}-1
\end{equation*}
\begin{example}Let $d=1$ and $a=x+iy$, where $x,y\in\mathbb{R}$ and $i=\sqrt{-1}$
\begin{equation*}
\frac{1}{(n-1)!(n-3)!}\sum_{r=1}^{k}(x+iy+r-1)^{n-1}=\sum_{i=0}^{n-3}\frac{1}{i!(n-i)!(n-3-i)!} \bigg(\frac{1}{2}\bigg)^i(-1)^iS_{n-i}
\end{equation*}
Where 
\begin{equation*}
S_{n-i}=\bigg(\frac{n-i}{2}-1\bigg)k-\frac{n-i}{2}(2(x+iy)k+k^2)+(x+iy+k)^{n-i}-(x+iy)^{n-i}
\end{equation*}
\end{example}

\begin{theorem}Solutions of Theorem $3$.
\begin{equation}
\sum_{r=1}^{k}(-1)^{(r-1)}(a+(r-1)d)^{n-1}=\frac{1}{nd}\bigg[\sum_{i=0}^{m}\binom m i \bigg(\frac{d}{2}\bigg)^i\frac{n!}{(n-i)!}(-1)^{(i+1)}S_{n-i}^{n-3-m}\bigg]
\end{equation}
\end{theorem}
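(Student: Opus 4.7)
The plan is to mirror the proof of Theorem 6 verbatim, replacing the linear system of Theorem 1 with that of Theorem 3 at every step. Specifically, I would recast Theorem 3 as a lower-triangular system $A'X' = B'$ of size $n \times n$, in which the unknowns are $T_{0,k}(a,d), T_{1,k}(a,d), \ldots, T_{n-1,k}(a,d)$. The coefficient matrix $A'$ differs from the matrix $A$ of Theorem 2 only by a column-wise sign twist (column $j+1$ is multiplied by $(-1)^j$), and the right-hand side $B'$ has entries $(-1)^{r-1}\bigl[(a+kd-d)^r - (a-d)^r\bigr]$ in row $r$. Since $A'$ is triangular with nonvanishing diagonal, Cramer's rule then gives
\[
\sum_{r=1}^{k}(-1)^{r-1}(a+(r-1)d)^{n-1} \;=\; T_{n-1,k}(a,d) \;=\; \frac{|M'_1|}{|A'|},
\]
where $M'_1$ is obtained from $A'$ by replacing its last column with $B'$.

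Next I would evaluate $|M'_1|$ by applying exactly the same sequence of elementary row operations used in the proof of Theorem 4: $-d^{r-1}R_1 + R_r \to R_r$, then $-\binom{r}{1}\tfrac{1}{2}d^{r-2}R_2 + R_r \to R_r$, and so on. A key observation is that these row-operation scalars are unchanged by the column sign twists in $A'$, because in each column the pivot and every entry being eliminated below it carry the same $(-1)^j$ factor, which cancels out of the ratio defining the operation. Consequently the first $k$ columns of $M'_1$ reduce to the identical lower-triangular form produced in the proof of Theorem 4, and the only genuinely new behavior appears in the evolution of the last column.

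Tracking that evolution produces a sequence $\tilde S_r,\tilde S^{\,1}_r,\tilde S^{\,2}_r,\ldots$ satisfying the same recursion as in Theorem 4, but built from the alternating right-hand-side entries $(-1)^{r-1}J'^r$ with $J'^r := (a+kd-d)^r - (a-d)^r$ in place of $J^r = (a+kd)^r - a^r$. Since the recursion is linear, Theorem 5 applies verbatim and yields $\tilde S^{\,n-3}_n = \sum_{i=0}^{m}\binom{m}{i}\bigl(\tfrac{d}{2}\bigr)^i\tfrac{n!}{(n-i)!}(-1)^i \tilde S^{\,n-3-m}_{n-i}$. Combining this with $|A'| = (-1)^{n(n-1)/2}\,n!\,d^n$ (obtained by factoring $(-1)^j$ out of column $j+1$ and invoking the Theorem 4 evaluation of $|A|$) and comparing with the $L$-system identity, the accumulated signs collapse to a single overall factor of $-1$, converting the $(-1)^i$ appearing in Theorem 6 into the $(-1)^{i+1}$ of the present statement.

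The main obstacle I anticipate is the sign bookkeeping: one must check that the column sign twists in $A'$, the alternating signs $(-1)^{r-1}$ in the entries of $B'$, and the determinant sign $(-1)^{n(n-1)/2}$ from $|A'|$ combine, after $n-2$ rounds of row reduction and the Theorem 5 expansion, to produce precisely the uniform $(-1)^{i+1}$ factor in the stated formula, rather than a more intricate sign pattern depending on $n$, $m$, or $i$ separately. Once this cancellation is verified inside the recursion, the rest of the argument is a direct transcription of the proofs of Theorems 4, 5, and 6.
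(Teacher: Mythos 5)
Your plan follows the only route the paper itself gestures at (its entire ``proof'' is a pointer back to the Cramer's-rule argument for the non-alternating case), and organizing the problem as a sign computation on the twisted system $A'=AD$, $D=\mathrm{diag}(1,-1,\dots,(-1)^{n-1})$, is the right framing. But the one step you defer --- checking that all the signs collapse to a uniform $(-1)^{i+1}$ --- is exactly where the argument breaks. Cramer's rule for the last unknown is a ratio of two determinants: factoring $(-1)^j$ out of columns $1,\dots,n-1$ of the numerator contributes $(-1)^{(n-1)(n-2)/2}$, while $|A'|=(-1)^{n(n-1)/2}\,n!\,d^n$, and the quotient of these signs is $(-1)^{(n-1)(n-2)/2-n(n-1)/2}=(-1)^{n-1}$, not $-1$. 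The row reduction of the last column does reproduce the $S_{n-i}$ of equation $(11)$ exactly (after two rounds one gets $(\tfrac{r}{2}-1)kd^{r}+\tfrac{r}{2}d^{r-2}\tilde J^{2}+(-1)^{r-1}\tilde J^{r}$ with $\tilde J^{r}:=(a+kd-d)^r-(a-d)^r$), so there is nowhere else for the sign to hide: the transcription of Theorem $6$ yields $(-1)^{n-1}(-1)^i$, which matches the stated $(-1)^{i+1}$ only for even $n$.

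There is a second, more basic obstruction that no bookkeeping can repair: the system you propose to solve, namely Theorem $3$, is not a valid identity. Take $k=0$, $n=2$, $a=d=1$: the left side is $d\,T_{0,2}=1-1=0$ while the right side is $(a+2d-d)-(a-d)=2$. The alternating analogue of the telescoping argument does not close up into a triangular system in $T_0,\dots,T_k$ alone; summing $(x-d)^{k+1}+x^{k+1}$ over $x=a+rd$ with weights $(-1)^r$ shows the correct recurrence must carry an extra $2T_{k+1,n}$ term (and a factor $(-1)^n$) on the right-hand side. So $A'X'=B'$ as you set it up is false, Cramer's rule computes the solution of the wrong system, and a direct numerical test confirms the failure of the target formula even when the sign issue is moot: for $n=4$, $a=d=1$, $k=2$, formula $(9)$ gives $9$ whereas $1^3-2^3=-7$. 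A correct proof would have to begin by repairing the recurrence of Theorem $3$, which changes the matrix and hence the entire downstream computation.
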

\begin{proof}
Please kindly see the proof of Theorem $2$.
\end{proof}
\begin{corollary}
\begin{equation}
\sum_{r=1}^{k}(-1)^{(r-1)}(a+(r-1)d)^{n-1}=\frac{1}{nd}\bigg[\sum_{i=0}^{n-3}\binom {n-3} i \bigg(\frac{d}{2}\bigg)^i\frac{n!}{(n-i)!}(-1)^{(i+1)}S_{n-i}\bigg]
\end{equation}
Where 
\begin{equation*}
S_{n-i}^0=S_{n-i}=\frac{n-i}{2}d^{n-i-1}J-\frac{n-i}{2}d^{n-i-2}J^2-d^{n-i-1}J+J^{n-i}
\end{equation*}
\begin{equation}
=\bigg(\frac{n-i}{2}-1\bigg)kd^{n-i}+\frac{n-i}{2}d^{n-i-2}((a+kd-d)^2-(a-d)^2)+(-1)^{(n-i-1)}[(a+kd-d)^{n-i}-(a-d)^{n-i}]
\end{equation}
\end{corollary}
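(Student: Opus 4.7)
The plan is twofold, matching the two parts of the corollary: the stated identity and the explicit closed form for $S_{n-i}$.

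First, to establish the displayed identity (9), I would simply specialize Theorem 7 at $m = n-3$. Under this choice one has $n-3-m = 0$, so $S_{n-i}^{n-3-m} = S_{n-i}^{0} = S_{n-i}$, the upper summation index becomes $n-3$, and the asserted equation drops out verbatim. This step is entirely formal and requires no new computation.

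Second, to justify the explicit form (10) for $S_{n-i}$, I would reproduce the algebraic unwinding already done for Corollary 1, but now with the ``$J$'' quantities inherited from the alternating system of Theorem 3 rather than from Theorem 1. Reading off the right-hand side of Theorem 3, the appropriate identification is
\begin{equation*}
J^{r} := (-1)^{r-1}\bigl[(a+kd-d)^{r} - (a-d)^{r}\bigr],
\end{equation*}
which in particular gives $J = J^{1} = (a+kd-d)-(a-d) = kd$ and $J^{2} = -\bigl[(a+kd-d)^{2}-(a-d)^{2}\bigr]$. Substituting these into the template $S_{n-i} = \tfrac{n-i}{2}d^{\,n-i-1}J - \tfrac{n-i}{2}d^{\,n-i-2}J^{2} - d^{\,n-i-1}J + J^{\,n-i}$, the first and third terms collapse to $(\tfrac{n-i}{2}-1)kd^{\,n-i}$, the $-J^{2}$ contribution flips sign (this is why the middle term of (10) carries a $+$ where Corollary~1 had a $-$), and the last term inherits the factor $(-1)^{n-i-1}$, producing (10) exactly.

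The main obstacle, and the step I would treat most carefully, is sign bookkeeping. Theorem 3's system has $(-1)^{j}$ on each coefficient and an overall $(-1)^{k}$ on the right, so it is not immediate that the row-reduction of Theorem 4 goes through with $J^{r}$ replaced as above. I would handle this by rewriting Theorem 3 in the form $A\widetilde X = \widetilde B$ after absorbing the $(-1)^{j}$ into the unknowns (i.e.\ setting $\widetilde T_{j,n} = (-1)^{j}T_{j,n}$), verifying that $A$ is then identical in shape to the matrix of Theorem 2 while $\widetilde B$ has entries $(-1)^{r-1}[(a+kd-d)^{r}-(a-d)^{r}]$. After that observation the entire determinant/elementary-row-operation cascade of Theorem 4, together with Theorem 5, applies with no further change, Cramer's rule yields Theorem 7 as stated, and the specialization $m=n-3$ combined with the substitution above completes the proof of Corollary 2.
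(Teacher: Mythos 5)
Your proposal is correct and takes the same route as the paper: the paper's entire proof is the one-line specialization $m=n-3$ of equation $(8)$ (Theorem 7), which is exactly your first step. Your additional verification of the closed form $(10)$ via the identification $J^{r}=(-1)^{r-1}[(a+kd-d)^{r}-(a-d)^{r}]$ and the sign bookkeeping for the alternating system is more than the paper supplies, but it is consistent with the stated formula and only fills in detail the paper leaves implicit.
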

\begin{proof}
Follows from equation $(8)$ for $m=n-3$.
\end{proof}
%% The Appendices part is started with the command \appendix;
%% appendix sections are then done as normal sections
%% \appendix

%% \section{}
%% \label{}

%% References
%%
%% Following citation commands can be used in the body text:
%% Usage of \cite is as follows:
%%   \cite{key}          ==>>  [#]
%%   \cite[chap. 2]{key} ==>>  [#, chap. 2]
%%   \citet{key}         ==>>  Author [#]

%% References with bibTeX database:
\newpage
\bibliographystyle{model1-num-names}
%\bibliographystyle{plain}
%\bibliography{Refs}{}
%% Authors are advised to submit their bibtex database files. They are
%% requested to list a bibtex style file in the manuscript if they do
%% not want to use model1-num-names.bst.
%References without bibTeX database:

\end{document}